\newtheorem{theorem}{Theorem}[section]
\newtheorem{definition}[theorem]{Definition}
\newtheorem{prop}[theorem]{Proposition}
\newtheorem{lemma}[theorem]{Lemma}
\newtheorem*{theorem-non}{Theorem}
\theoremstyle{definition}
\def\HD{{\rm HD}}
\def\H{{\mathcal{H}}}
\def\R{\mathbb{R}}
\newcommand{\M}{\widetilde{M}}
\newcommand{\Bi}{\mathfrak{B}}
\newcommand{\bi}{\partial_{\infty}}
\DeclareMathSymbol{\varnothing}{\mathord}{AMSb}{"3F}
\begin{document}
\title{On linear escape and the dimension of limit sets in variable negative curvature}


\author[D. Pizarro]{Daniel Pizarro}
\address{IMA, Pontificia Universidad Cat\'olica de Valpara\'iso, Blanco Viel 596, Valpara\'iso, Chile.}
\email{daniel.pizarro@pucv.cl}

\author[F. Riquelme]{Felipe Riquelme}
\address{IMA, Pontificia Universidad Cat\'olica de Valpara\'iso, Blanco Viel 596, Valpara\'iso, Chile.}
\email{felipe.riquelme@pucv.cl}



\date{\today}


\begin{abstract}

In 2004, Bishop proved that for Kleinian groups acting on hyperbolic space, the Hausdorff dimension of the limit set is completely determined by two extremal dynamical behaviors: recurrent geodesics and geodesics escaping linearly to infinity. In this paper, we extend this phenomenon to arbitrary discrete groups of isometries of complete simply connected Riemannian manifolds with pinched negative sectional curvatures $-b^2\leq k\leq -1$. More precisely, we show that the Hausdorff dimension of the limit set coincides with the maximum of the Hausdorff dimensions of the radial limit set and the linear escape limit set. 
\end{abstract}

\maketitle

\section{Introduction}

The behavior of geodesic orbits in negatively curved manifolds has been widely studied for more than a century. To the best of our knowledge, Hadamard \cite{had:1898} was the first to exhibit non-compact negatively curved Riemannian manifolds admitting non-trivial bounded geodesic orbits. Since then, substantial effort has been devoted to understanding the size and structure of the set of bounded or recurrent geodesic orbits. For instance, Patterson \cite{Pat:1976} proved that, for convex-cocompact hyperbolic surfaces, the Hausdorff dimension of the set of bounded geodesic orbits coincides with the critical exponent of the fundamental group. This result was later extended in several directions: Stratmann \cite{st97} treated geometrically finite hyperbolic manifolds, Bishop and Jones \cite{bishop1997hausdorff} considered arbitrary hyperbolic three-manifolds, and Paulin \cite{paulin1997critical} established the corresponding result for complete Riemannian manifolds with pinched negative sectional curvatures. In particular, in this general setting, the Hausdorff dimension of the recurrent part of the limit set is given by the critical exponent.

A complementary problem is to understand the size of the non-recurrent part of the dynamics. This question becomes especially relevant for geometrically infinite manifolds, where escaping geodesics may contribute non-trivially to the Hausdorff dimension of the limit set. Matsuzaki \cite{Mat:2000} exhibited hyperbolic surfaces for which the Hausdorff dimension of the recurrent part is strictly smaller than the Hausdorff dimension of the whole limit set. Groups with this property are known as \emph{discrepancy groups}, or simply \emph{d-groups}. For such groups, the geometry of escaping geodesics is not a negligible phenomenon: it is necessary in order to recover the full dimension of the limit set. 

Motivated by Bishop's work on linear escape limit sets \cite{bishop2004linear}, we show that this missing dimension is completely accounted for by geodesic rays escaping at linear speed. More precisely, we prove that for any discrete group of isometries of a complete simply connected Riemannian manifold with pinched negative sectional curvatures, the Hausdorff dimension of the limit set is determined by two extremal dynamical regimes: recurrent geodesics and linearly escaping geodesics. Thus, intermediate escaping behavior does not contribute additional Hausdorff dimension.

Our work is primarily motivated by Bishop's study of linear escape limit sets \cite{bishop2004linear}. Related connections between escape rates and boundary approximation properties were also explored by Lundh \cite{lundh2003} through explicit hyperbolic geometric computations. In the variable curvature setting, however, a different geometric approach is required. In place of model-dependent hyperbolic formulas, the proof relies on CAT$(-1)$ comparison geometry together with shadow estimates in the boundary at infinity. The key geometric ingredient is Proposition \ref{prop:key}, which relates sublinear escape behavior to a weak form of recurrence.\\

Let $\M$ be a complete simply connected $n$-dimensional Riemannian manifold with pinched sectional curvatures $-b^2\leq k\leq -1$, let $\partial_\infty \M$ be its boundary at infinity, and let $\Gamma$ be a non-elementary discrete subgroup of isometries of $\M$. Given a basepoint $o\in\M$, the limit set $\Lambda_\Gamma\subset\partial_\infty\M$ is defined by
\[
\Lambda_\Gamma=\overline{\Gamma\cdot o}\setminus \Gamma\cdot o.
\]

For $\xi\in\partial_\infty\M$, let $\xi_t\in\M$ denote the point on the geodesic ray $[o,\xi)$ at distance $t$ from $o$, and define
\[
\Delta(\xi_t)=d(\xi_t,\Gamma\cdot o).
\]

The radial limit set $\Lambda_\Gamma^r$ is defined as
\[
\Lambda_\Gamma^r=
\left\{
\xi\in\Lambda_\Gamma:
\liminf_{t\to\infty}\Delta(\xi_t)<\infty
\right\}.
\]

Geometrically, points in $\Lambda_\Gamma^r$ correspond to geodesic rays in $\M/\Gamma$ that return infinitely often to a compact set. On the other hand, the transient limit set
\[
\Lambda_\Gamma^\tau=
\left\{
\xi\in\Lambda_\Gamma:
\lim_{t\to\infty}\Delta(\xi_t)=\infty
\right\}
\]
corresponds to geodesic rays eventually leaving every compact subset of $\M/\Gamma$. Among transient limit points, we distinguish those escaping at positive linear speed.

\begin{definition}
Let $\alpha>0$. The \emph{$\alpha$-linear escape limit set} is defined as
\[
\Lambda_\Gamma^l(\alpha)=
\left\{
\xi\in\Lambda_\Gamma:
\liminf_{t\to\infty}\frac{\Delta(\xi_t)}{t}>\alpha
\right\}.
\]
The \emph{linear escape limit set} is defined by
\[
\Lambda_\Gamma^l=
\bigcup_{0<\alpha<1}\Lambda_\Gamma^l(\alpha).
\]
\end{definition}

Our main result shows that, from the point of view of Hausdorff dimension, the limit set is completely determined by two extremal dynamical behaviors: recurrence and linear escape.

\begin{theorem}\label{main1}
Let $\M$ be a complete negatively curved $n$-dimensional Riemannian manifold with pinched sectional curvatures $-b^2\leq k\leq -1$. If $\Gamma$ is a discrete subgroup of isometries of $\M$, then
\[
\HD(\Lambda_\Gamma)
=
\max\left\{
\HD(\Lambda_\Gamma^r),
\HD(\Lambda_\Gamma^l)
\right\}.
\]
\end{theorem}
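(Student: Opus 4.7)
The inequality $\HD(\Lambda_\Gamma)\geq\max\{\HD(\Lambda^r_\Gamma),\HD(\Lambda^l_\Gamma)\}$ is immediate from monotonicity of Hausdorff dimension, so only the reverse bound requires work. I would split $\Lambda_\Gamma$ according to the asymptotic behavior of $\Delta(\xi_t)$ as
\[
\Lambda_\Gamma\;=\;\Lambda^r_\Gamma\;\sqcup\;\Lambda^l_\Gamma\;\sqcup\;\Lambda^{\rm sl}_\Gamma,
\qquad
\Lambda^{\rm sl}_\Gamma\eqdef\Bigl\{\xi\in\Lambda^\tau\,:\,\liminf_{t\to\infty}\tfrac{1}{t}\Delta(\xi_t)=0\Bigr\},
\]
i.e.\ recurrent orbits, linearly escaping orbits, and sub-linearly escaping orbits. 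Paulin's theorem \cite{paulin1997critical} gives $\HD(\Lambda^r_\Gamma)=\delta_\Gamma$, where $\delta_\Gamma$ denotes the critical exponent of $\Gamma$, so the theorem reduces to the single estimate $\HD(\Lambda^{\rm sl}_\Gamma)\leq\delta_\Gamma$.

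To bound the dimension of $\Lambda^{\rm sl}_\Gamma$, my approach is a Borel--Cantelli style covering indexed by a vanishing parameter. For $\alpha\in(0,1)$ set
\[
\Lambda(\alpha)\eqdef\bigl\{\xi\in\Lambda\,:\,\Delta(\xi_t)<\alpha t\text{ for arbitrarily large }t\bigr\},
\]
so $\Lambda^{\rm sl}_\Gamma\subseteq\bigcap_{0<\alpha<1}\Lambda(\alpha)$. Whenever $\Delta(\xi_t)<\alpha t$ there exists $g\in\Gamma$ with $d(g\cdot o,\xi_t)<\alpha t$; the triangle inequality then forces $|d(o,g\cdot o)-t|<\alpha t$ and places $\xi$ in the shadow from $o$ of the Riemannian ball $B(g\cdot o,\alpha t)$, which I denote $\cO_o(g\cdot o,\alpha t)$. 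Consequently
\[
\Lambda(\alpha)\;\subseteq\;\bigcap_{T\geq 0}\ \bigcup_{\substack{g\in\Gamma\\ d(o,g\cdot o)\geq T}}\cO_o\bigl(g\cdot o,\,\alpha\,d(o,g\cdot o)\bigr),
\]
exhibiting $\Lambda(\alpha)$ as a $\limsup$-set to which standard Hausdorff-measure estimates apply.

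Two inputs are then needed: a uniform \emph{shadow lemma} controlling $\diam\cO_o(g\cdot o,r)$ in a visual metric on $\bi\M$ based at $o$, and a sharp upper bound on orbital growth. The upper curvature bound $k\leq-1$ yields the Sullivan-type estimate
\[
\diam\cO_o\bigl(g\cdot o,\alpha\,d(o,g\cdot o)\bigr)\;\leq\; C\,e^{-(1-\alpha)\,d(o,g\cdot o)},
\]
uniformly in $g$, while the definition of $\delta_\Gamma$ gives, for every $\epsilon>0$,
\[
\#\{g\in\Gamma:n\leq d(o,g\cdot o)<n+1\}\;\leq\; C_\epsilon\,e^{(\delta_\Gamma+\epsilon)n}.
\]
Summing the $s$-th power of the diameters of the covering shadows over annuli gives
\[
\sum_{n\geq T}C_\epsilon\,e^{(\delta_\Gamma+\epsilon)n}\cdot C^{s}\,e^{-s(1-\alpha)n},
\]
which tends to $0$ as $T\to\infty$ whenever $s>(\delta_\Gamma+\epsilon)/(1-\alpha)$. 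Hence $\HD(\Lambda(\alpha))\leq(\delta_\Gamma+\epsilon)/(1-\alpha)$, and letting $\epsilon\to 0$ then $\alpha\to 0$ yields $\HD(\Lambda^{\rm sl}_\Gamma)\leq\delta_\Gamma$, which combined with the trivial lower bound finishes the proof.

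The principal obstacle is the shadow estimate with constants uniform in $g\in\Gamma$ in pinched \emph{variable} negative curvature: the upper bound $k\leq-1$ supplies the exponential contraction, but the lower bound $-b^2\leq k$ is crucial both to prevent the shadows from distorting at infinity and to compare the Riemannian ball of radius $\alpha t$ centered at $g\cdot o$ with its shadow through the visual metric on $\bi\M$. Once this uniform shadow lemma and the exponential orbital counting bound are available---both standard in the pinched-curvature literature---the $\limsup$/Hausdorff measure computation above is routine.
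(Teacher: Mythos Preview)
Your proposal is correct and follows essentially the same strategy as the paper: cover the set of limit points with sublinear escape by orbit-indexed visual balls of radius $\asymp e^{-(1-O(\alpha))\,d(o,\gamma o)}$, sum via the Poincar\'e series to obtain $\HD\leq \delta_\Gamma/(1-\alpha)$, and let $\alpha\to 0$. The paper packages this through the intermediate ``$\kappa$-weakly recurrent'' set $\Lambda^r(\kappa)$ (Definition~\ref{def:wrls}, Propositions~\ref{prop:key} and~\ref{prop:FS}), and the uniform large-radius shadow estimate you correctly flag as the principal obstacle is exactly what Lemmas~\ref{lem:main1}--\ref{lem:main2} establish.
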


\subsection*{Acknowledgements}
 Both authors were supported by Agencia Nacional de Investigaci\'on y Desarrollo, grant FONDECYT Regular 1231257.

\section{Preliminaries}

\subsection{Geometry} Let $\M$ be a complete simply connected $n$-dimensional Riemannian manifold with pinched sectional curvatures $-b^2\leq k\leq -1$, with $b\geq 1$. The \emph{boundary at infinity} $\bi\M$ is the set of equivalent classes of asymptotic geodesic rays on $\M$. Endowing $\M\cup \bi\M$ with the cone topology makes $\M$ homeomorphic to the open unit $n$-dimensional Euclidean ball and $\bi\M$ homeomorphic to the unit $(n-1)$-dimensional sphere. We endow $\bi\M$ with the Gromov-Bourdon distance, which induces the cone topology, as follows. Let $z\in\M$. The \emph{Gromov product} at $z$ between $w_1,w_2\in\M$ is defined as
\[
\langle w_1,w_2\rangle_z =\frac{1}{2}\left(d_{\M}(w_1,z)+d_{\M}(z,w_2)-d_{\M}(w_1,w_2)\right).
\]
The Gromov product at $z$ is extended to the boundary as the limit
\[
\langle \xi_1,\xi_2\rangle_z=\lim_{w_1\to \xi_1, w_2\to \xi_2}\langle w_1,w_2\rangle_z.
\]
where $\xi_1,\xi_2\in\bi\M$. Finally, given $z\in\M$, the \emph{Gromov-Bourdon visual distance} $\rho_z$ on $\bi\M$ is defined as
\[
\rho_{z}(\xi_1,\xi_2)=\left\{\begin{matrix}
e^{-\langle \xi_1,\xi_2\rangle_{z}} & \textrm{if} & \xi_1\neq \xi_2,\\ 
0 & \textrm{if} & \xi_1 = \xi_2.
\end{matrix}\right.
\]
These distances are mutually conformal, and more precisely, Lipschitz-equivalent (see for instance \cite{Bou95}). 

From now on, we fix a point $o\in\M$, which will be called \emph{origin}, and $\rho:=\rho_o$ will be the visual distance seen from $o$. Balls of radius $r>0$ and center $\xi\in\partial_\infty\M$ will be denoted by $\mathfrak{B}(\xi,r)$. A ball of radius $R>0$ and center $z\in\widetilde{M}$ will be denoted as $B(z,R)$, as usual.

\begin{definition} Given $z\in\widetilde{M}$ and $R>0$, we define the \textit{shadow at infinity} $\mathcal{O}_{o}(z,R)$ of $B(z,R)$ as
\[
\mathcal{O}_{o}(z,R):=\left\{\xi\in\bi\M : [o,\xi)\cap B(z,R)\neq\emptyset\right\}.
\]
\end{definition}

Shadows and boundary balls are related by the following result proved by Kaimanovich in \cite{kaimanovich1990invariant}.

\begin{theorem}[Kaimanovich]\label{teo:kai} There exists a constant $c\geq 1$ such that for every $\xi\in\bi\M$ and $t>0$, we have
\[
\Bi(\xi,c^{-1}e^{-t}) \subset \mathcal{O}_{o}(\xi_t,1) \subset \Bi(\xi,ce^{-t}),
\]
where $\xi_t\in\M$ is the point on the geodesic ray $[o,\xi)$ at distance $t$ from $o$. 
\end{theorem}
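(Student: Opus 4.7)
The plan is to reduce each inclusion to a comparison between the Gromov product $T:=\langle \xi,\eta\rangle_o$ (so that $\rho_o(\xi,\eta)=e^{-T}$) and distances measured along the rays $[o,\xi)$ and $[o,\eta)$, then to exploit the CAT($-1$) geometry of $\M$ (which follows from $k\leq -1$) to turn these into sharp exponential estimates via triangle comparison with $\mathbb{H}^2$.

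For the second inclusion, I would assume $\eta\in\mathcal{O}_o(\xi_t,1)$ and pick $w\in[o,\eta)\cap B(\xi_t,1)$, so that $t-1\leq d(o,w)\leq t+1$. Denoting by $\xi_s\in[o,\xi)$ and $\eta'_s\in[o,\eta)$ the points at distance $s$ from $o$, for $s\geq t+1$ the triangle inequality through $\xi_t$ and $w$ yields
\[
d(\xi_s,\eta'_s)\leq d(\xi_s,\xi_t)+d(\xi_t,w)+d(w,\eta'_s)\leq (s-t)+1+(s-d(o,w))\leq 2(s-t)+2,
\]
hence $\langle\xi_s,\eta'_s\rangle_o = s-\tfrac{1}{2}d(\xi_s,\eta'_s)\geq t-1$. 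Letting $s\to\infty$ gives $T\geq t-1$, i.e.\ $\rho_o(\xi,\eta)\leq e\cdot e^{-t}$, which establishes the inclusion $\mathcal{O}_o(\xi_t,1)\subset \Bi(\xi,ce^{-t})$ for any $c\geq e$.

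For the first inclusion, I would suppose $T\geq t+\log c$ and aim to show $d(\xi_t,\eta_t)\leq 1$, where $\eta_t\in[o,\eta)$ is at distance $t$ from $o$; then $\eta_t\in[o,\eta)\cap B(\xi_t,1)$ and $\eta\in\mathcal{O}_o(\xi_t,1)$. For each $s>T$, the CAT($-1$) comparison applied to the finite triangle $(o,\xi_s,\eta_s)$ yields a comparison triangle $(\bar o,\bar\xi_s,\bar\eta_s)$ in $\mathbb{H}^2$ satisfying $d(\xi_t,\eta_t)\leq d(\bar\xi_t,\bar\eta_t)$. Letting $s\to\infty$, these triangles converge to the essentially unique ideal hyperbolic triangle with Gromov product $T$ at the comparison origin, on which a direct upper-half-plane computation—for instance, placing $\bar o=i$, $\bar\xi=\infty$, $\bar\eta=\sqrt{e^{2T}-1}\in\mathbb{R}$ so that $\langle \bar\xi,\bar\eta\rangle_{\bar o}=T$—produces $d(\bar\xi_t,\bar\eta_t)\leq C_0\,e^{-(T-t)}$ for a universal constant $C_0$. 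Hence $d(\xi_t,\eta_t)\leq C_0/c$, and choosing $c:=\max\{e,C_0\}$ completes the proof.

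The main technical obstacle is the passage to the ideal limit in the CAT($-1$) comparison: the comparison inequality is formulated for finite-vertex triangles, so one must verify that the bound $d(\xi_t,\eta_t)\leq d(\bar\xi_t,\bar\eta_t)$ survives the limit $s\to\infty$, and that the sequence of hyperbolic comparison triangles indeed converges to an ideal triangle with the correct Gromov product at the comparison origin. This is standard, resting on continuity of the Gromov product on $\M\cup\partial_\infty\M$ and on compactness of the space of ideal hyperbolic triangles with bounded Gromov product, but it is where the geometric content of the argument really sits. An alternative route that sidesteps the ideal-limit subtlety is to establish the exponential-divergence estimate $d(\xi_s,\eta_s)\leq C_0\, e^{-(T-s)}$ directly in $\M$, by combining convexity of the distance function between geodesic rays emanating from $o$ with a single sharp CAT($-1$) bound at a time $s$ close to $T$.
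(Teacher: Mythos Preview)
The paper does not prove this statement: Theorem~\ref{teo:kai} is attributed to Kaimanovich and cited from \cite{kaimanovich1990invariant} without argument, so there is no ``paper's own proof'' to compare against. Your proposal therefore supplies a proof where the paper simply quotes the result.

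That said, your argument is sound and follows the standard route. The second inclusion is elementary and your computation via the Gromov product of the finite approximants $\xi_s,\eta'_s$ is correct; in a CAT($-1$) space the Gromov product extends continuously to $\partial_\infty\widetilde M$, so the passage $s\to\infty$ is unproblematic. For the first inclusion your CAT($-1$) comparison strategy is the right one, and the exponential divergence estimate $d(\bar\xi_t,\bar\eta_t)\leq C_0\,e^{-(T-t)}$ in $\mathbb{H}^2$ is classical. The only point requiring some care, which you already flag, is the ideal limit of the comparison triangles; your suggested alternative---proving the exponential estimate directly in $\widetilde M$ from convexity of $t\mapsto d(\xi_t,\eta_t)$ together with a single finite CAT($-1$) comparison near time $T$---is cleaner and avoids that issue entirely. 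Either variant yields the theorem with a universal constant $c$, which is all the paper uses downstream (in Proposition~\ref{prop:key}).
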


The action by isometries of $\M$ extends naturally to the boundary as homeomorphisms of $\bi\M$. Accordingly, isometries are classified according to the number of fixed points in $\M$ or $\bi\M$. An isometry is called \emph{hyperbolic} if it fixes exactly two points in $\bi\M$, \emph{parabolic} if it fixes exactly one point in $\bi\M$, and \emph{elliptic} if it fixes exactly one point in $\M$. 

Recall that a \emph{Kleinian group} is a discrete group of isometries of $\M$. Since every elliptic element in a Kleinian group has finite order, torsion-free Kleinian groups consist exclusively of hyperbolic or parabolic elements.

\begin{definition} The \emph{Poincar\'e series} of a Kleinian group $\Gamma<\textrm{Isom}(\M)$ is defined as
\[
P_\Gamma(s):=\sum_{\gamma\in\Gamma} e^{-s d(o,\gamma\cdot o)}.
\]
\end{definition}
The Poincar\'e series converges for $s>\delta_\Gamma$ and diverges for $s<\delta_\Gamma$, where $\delta_\Gamma$ is the non-negative real number defined as
\[
\delta_\Gamma:=\limsup_{R\to\infty} \frac{1}{R}\log \#\{ \gamma\in\Gamma : d(o,\gamma\cdot o)\leq R  \}.
\] 

To end this subsection, we establish two geometric lemmas that will be useful later. 

\begin{lemma}\label{lem:main1} Let $z,w\in\widetilde{M}$ be such that $d(z,w)\leq \beta d(o,z)$, where $0<\beta<1$. Then
$$\frac{1}{1+\beta}d(o,w)\leq d(o,z)\leq\frac{1}{1-\beta}d(o,w).$$
\end{lemma}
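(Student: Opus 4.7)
The statement is purely metric: negative curvature plays no role, since both bounds follow from the triangle inequality on $\widetilde{M}$ viewed just as a metric space. The plan is therefore to apply the triangle inequality twice, once in each direction, and rearrange.

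For the upper bound on $d(o,w)$ relative to $d(o,z)$, I would write
\[
d(o,w) \leq d(o,z) + d(z,w) \leq d(o,z) + \beta\, d(o,z) = (1+\beta)\, d(o,z),
\]
which gives immediately $\frac{1}{1+\beta} d(o,w) \leq d(o,z)$, yielding the left-hand inequality.

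For the right-hand inequality, I would apply the triangle inequality the other way:
\[
d(o,z) \leq d(o,w) + d(w,z) \leq d(o,w) + \beta\, d(o,z).
\]
Because $0<\beta<1$, the term $\beta\, d(o,z)$ can be moved to the left, giving $(1-\beta)\, d(o,z) \leq d(o,w)$, and then dividing by $1-\beta>0$ yields $d(o,z)\leq \frac{1}{1-\beta} d(o,w)$.

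There is no real obstacle here; the only subtlety to watch is that $1-\beta>0$, which is exactly the hypothesis $\beta<1$, so dividing is legitimate. The assumption $d(z,w)\leq \beta d(o,z)$ (rather than $\beta d(o,w)$) is what makes both triangle inequality estimates give bounds in terms of $d(o,z)$ with the same multiplicative control, so the proof is really just two lines.
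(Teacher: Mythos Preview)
Your proof is correct and matches the paper's approach exactly: both inequalities are obtained from the triangle inequality applied to the triple $o,z,w$, and the paper even leaves the second inequality to ``a similar argument'' which you have spelled out in full. There is nothing to add.
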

\begin{proof}
Using triangle inequality, we get
\begin{eqnarray*}
    d(o,w) &\leq& d(o,z) + d(z,w)\\
    &\leq& d(o,z) + \beta d(o,z)\\
    &\leq& (1+\beta) d(o,z),
\end{eqnarray*}
so the first inequality follows. The second inequality follows from a similar argument.
\end{proof}
 
For the second lemma, given $z\in\widetilde{M}\setminus\{o\}$, we will denote by $z^+\in\partial_\infty\widetilde{M}$ to the extremity point of the geodesic ray starting at $o$ which contains $z$.   
 
\begin{lemma}\label{lem:main2} Let $z,w\in\widetilde{M}$ be such that $d(z,w)\leq \beta d(o,z)$, where $0<\beta<1/5$. Then, there is $p\in [o,w^+)$ such that $d\big(p,[o,z^+)\big)\leq 1$ and 
\[
d(o,p) \geq \frac{1-2\beta-\beta^2}{1-\beta^2} d(o,w).
\]
\end{lemma}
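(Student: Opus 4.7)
The plan is to reduce the statement to an explicit hyperbolic trigonometric computation via the CAT($-1$) comparison theorem, which applies since the sectional curvature of $\widetilde{M}$ is at most $-1$. Write $a = d(o,z)$, $b = d(o,w)$, $c = d(z,w)$; the hypothesis reads $c \leq \beta a$, and Lemma~\ref{lem:main1} gives $(1-\beta)a \leq b \leq (1+\beta)a$, whence also $c \leq \tfrac{\beta}{1-\beta}\,b$. I set $\alpha := \tfrac{1-2\beta-\beta^2}{1-\beta^2}$ and take $p := \gamma_w(\alpha b) \in [o,w^+)$, so that $d(o,p) = \alpha\,d(o,w)$ automatically satisfies the required lower bound; the work is then to prove $d(p,[o,z^+)) \leq 1$.

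Next, form the comparison triangle $\bar o\bar z\bar w \subset \mathbb{H}^2$ with the same side lengths $a,b,c$, let $\bar\theta$ be its angle at $\bar o$, and let $\bar p \in [\bar o,\bar w]$ be the point at parameter $\alpha b$ from $\bar o$. The algebraic identity $\alpha(1+\beta) = \tfrac{1-2\beta-\beta^2}{1-\beta} < 1$ (valid for $\beta > 0$) gives $\alpha b < a$, which places the foot of perpendicular from $\bar p$ onto the line through $[\bar o,\bar z^+)$ inside the segment $[\bar o,\bar z]$. The CAT($-1$) comparison applied to the triangle $ozw$ yields $d(p,q) \leq d(\bar p,\bar q)$ for every $q \in [o,z]$ and the corresponding $\bar q \in [\bar o,\bar z]$ at the same parameter; minimizing over $q$ therefore gives
\[
d(p,[o,z^+)) \leq d(p,[o,z]) \leq d(\bar p,[\bar o,\bar z]) = \sinh^{-1}\!\bigl(\sinh(\alpha b)\sin\bar\theta\bigr).
\]
The hyperbolic law of sines applied to $\bar o\bar z\bar w$ gives $\sin\bar\theta \leq \sinh(c)/\sinh(b)$, so the conclusion $d(p,[o,z^+)) \leq 1$ reduces to the single hyperbolic trigonometric inequality
\[
\sinh(\alpha b)\,\sinh(c) \leq \sinh(1)\,\sinh(b).
\]

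The main obstacle is verifying this last inequality for all admissible $(b,c)$. After using $c \leq \tfrac{\beta}{1-\beta}b$, it suffices to show $\sinh(\alpha b)\sinh\!\bigl(\tfrac{\beta}{1-\beta}b\bigr) \leq \sinh(1)\sinh(b)$ for every $b \geq 0$. The central algebraic fact is
\[
\alpha + \frac{\beta}{1-\beta} = \frac{1}{1+\beta} < 1,
\]
which forces the left-hand side to decay relative to the right-hand side as $b \to \infty$; at the opposite extreme, the left-hand side is quadratic in $b$ while the right-hand side is linear, so the inequality is automatic near $b = 0$. The restriction $\beta < 1/5$ supplies enough margin to interpolate across the intermediate range; the verification can be carried out by expanding both sides via $2\sinh(A)\sinh(B) = \cosh(A+B) - \cosh(A-B)$ and comparing the resulting hyperbolic cosines. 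As a useful simplification, in the easy regime $c \leq 1$ the trivial choice $p = w$ already gives $d(p,[o,z^+)) \leq c \leq 1$, so one may assume $c > 1$ throughout, whence $a > 1/\beta > 5$ and the asymptotic estimates become comfortably effective.
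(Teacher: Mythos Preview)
Your argument is correct and takes a genuinely different route from the paper's, though both rest on a CAT($-1$) comparison.

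The paper first passes to an \emph{isoceles} configuration: it replaces $z,w$ by the points $z'\in[o,z]$, $w'\in[o,w]$ at common distance $\tfrac{1}{1+\beta}\,d(o,w)$ from $o$, and compares the triangle $oz'w'$ to a hyperbolic one. The hypothesis $\beta<1/5$ is used precisely to force the base $d(z',w')$ shorter than the equal legs, so that the apex angle is acute; only then does a point on one leg at perpendicular distance exactly~$1$ from the other leg exist. Two applications of the law of sines then bound from below how far along the leg that point sits, and comparison transfers this back to $\widetilde M$.

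You instead keep the original triangle $ozw$, fix $p$ at the target parameter $\alpha\,d(o,w)$ on $[o,w]$, and bound its perpendicular distance to $[o,z]$ by one right-triangle identity together with one law-of-sines estimate. This is more direct, and the endgame inequality you isolate is easy to close: after disposing of $c\le 1$ via $p=w$, one has $b>(1-\beta)/\beta>\tfrac{1}{2}\ln 2$, so $\sinh b\ge\tfrac14 e^{b}$ while $\sinh(\alpha b)\sinh(\gamma b)\le\tfrac14 e^{(\alpha+\gamma)b}$, whence
\[
\frac{\sinh(\alpha b)\,\sinh(\gamma b)}{\sinh(1)\,\sinh(b)}\;\le\;\frac{1}{\sinh 1}\,e^{(\alpha+\gamma-1)b}\;<\;1,
\]
using only your key identity $\alpha+\gamma=\tfrac{1}{1+\beta}<1$. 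Since the acuteness of $\bar\theta$ follows already from $c<a$ (via the hyperbolic law of cosines), and $\alpha b<a$ follows from $\alpha(1+\beta)<1$, your argument in fact goes through on the whole natural range $0<\beta<\sqrt 2-1$ where $\alpha>0$; the restriction $\beta<1/5$ is an artifact of the paper's isoceles reduction. What the paper's setup buys is a more symmetric trigonometric picture; what yours buys is a sharper range and a shorter computation.
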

\begin{proof}
By Lemma \ref{lem:main1}, we know that $d(o,z)\geq\frac{1}{1+\beta}d(o,w)=:a$. Let $z'\in[o,z]$ and $w'\in[o,w]$ be such that $d(o,z')=a=d(o,w')$. Set $b:=d(z',w')$. Since $\M$ is a $CAT(-1)$-space, we can consider a comparison triangle $\triangle\bar{x}\bar{y}\bar{z}$ in $\mathbb{H}$, where $d_1(\bar{x},\bar{y})=a=d_1(\bar{x},\bar{z})$, and $d_1(\bar{y},\bar{z})=b$ (see Figure \ref{fig:fig4} below).

\noindent
We claim that, if $\beta<1/5$, then the angle $\theta$ at $\bar{x}$ of $\triangle\bar{x}\bar{y}\bar{z}$ is in $(0,\pi/2]$. In order to prove it, it is enough to show that $b\leq a$. Indeed, by triangle inequality, we have
\[
b = d(z',w')\leq d(z',z)+d(z,w)+d(w,w').
\]

\begin{figure}[h]
    \centering
    \begin{overpic}[width=1.03\linewidth]{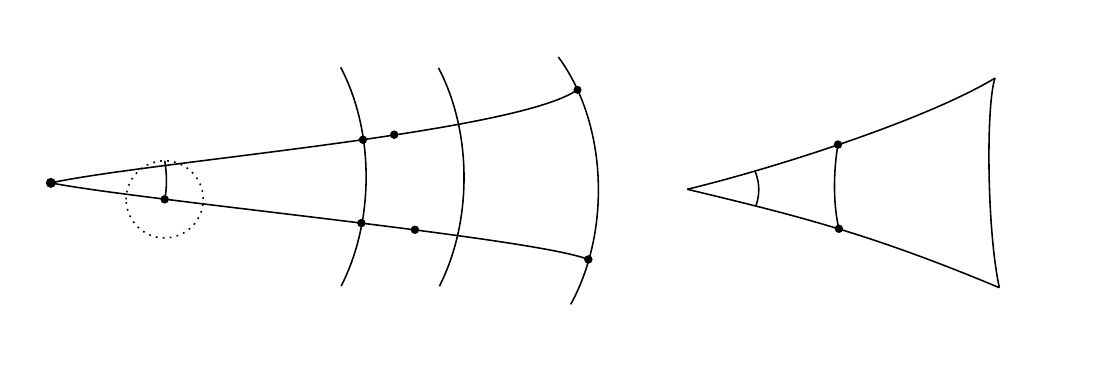}
    \put(5,27){$\widetilde{M}$}
    \put(3,15){$o$}
    \put(14,13.3){$p$}
    \put(9,14){$c$}
    \put(15.5,16){$1$}
    \put(19,22){$a$}
    \put(19,10.5){$a$}
    \put(34,16){$b$}
    \put(33.4,21.7){$z'$}
    \put(36,22.2){$z$}
    \put(33,11){$w'$}
    \put(37.5,10.6){$w$}
    \put(54,8.3){$w^{+}$}
    \put(53.3,26){$z^{+}$}
    \put(52,3){$\partial_{\infty} \widetilde{M}$}
    \put(63,27){$\mathbb{H}$}
    \put(60.5,15.5){$\overline{x}$}
    \put(75.5,10){$\overline{p}$}
    \put(75.5,22){$\overline{q}$}
    \put(91,27){$\overline{z}$}
    \put(91.5,5.2){$\overline{y}$}
    \put(91,16){$b$}
    \put(76.5,15.7){$1$}
    \put(77,26){$a$}
    \put(77,6){$a$}
    \footnotesize
    \put(66.5,15.7){$\theta$}
    \normalsize
    \put(69,12){$c$}
    \end{overpic}
    \caption{Comparison theorem}
    \label{fig:fig4}
\end{figure}
\noindent
Moreover, by Lemma \ref{lem:main1}, we know that
\[
d(z',z)\leq \left(\frac{1}{1-\beta}-\frac{1}{1+\beta}\right)d(o,w),
\quad
d(z,w)\leq \frac{\beta}{1-\beta}d(o,w)
\]
and
\[
d(w,w')\leq \left(1-\frac{1}{1+\beta}\right)d(o,w).
\] 
Hence $d(z',w')\leq \frac{4\beta}{(1-\beta)(1+\beta)}d(o,w)$, or equivalently, $d(z',w')\leq \frac{4\beta}{1-\beta}d(o,w')$. In particular, if $\beta<1/5$, we get $b\leq a$.

Since $0<\theta<\pi/2$, there exists $\bar{p}\in [\bar{x},\bar{y}]$ at distance $1$ from the geodesic segment $[\bar{x},\bar{z}]$. Set $\bar{q}\in [\bar{x},\bar{z}]$ such that $d_1(\bar{p},\bar{q})=1$. Let $c:= d_1(\bar{x},\bar{p})$. By the hyperbolic law of sines, we have

\[
\frac{1}{\sinh(c)}=\frac{\sin(\theta)}{\sinh(1)}
\quad
\mbox{and}
\quad
\frac{1}{\sinh(a)}= \frac{\sin(\theta/2)}{\sinh(b/2)}.
\]
Thus
\[
\frac{\sinh(1)}{\sinh(c)}=\sin(\theta)=2\cos(\theta/2)\frac{\sinh(b/2)}{\sinh(a)}.
\]
Since $\cos(\theta/2)\leq 1$ and the hyperbolic sinus verifies $e^t-1\leq 2\sinh(t) \leq e^t$, one can easily check that
\[
2C(e^a-1)\leq e^ce^{b/2},
\]
where $C=\sinh(1)/2$. Moreover, if $a\geq\ln(2)$, one has $2(e^a-1)\geq e^a$, so $Ce^a\leq e^{c+b/2}$. In particular, if $b/2\geq -\ln(C)$, we get
\[
c\geq a-b.
\] 
Since $a=\frac{1}{1+\beta}d(o,w)$ and $b=d(z',w')\leq d(z,w)\leq \frac{\beta}{1-\beta}d(o,w)$, we conclude
\[
c \geq \frac{1-2\beta-\beta^2}{1-\beta^2}d(o,w).
\]

Let $p\in[o,w]$ be the unique point such that $d(o,p)=c$ and let $q\in [o,z]$ be the unique point such that $d(o,q)=d_1(\bar{x},\bar{q})$. Then, by comparison, we have $1=d_1(\bar{p},\bar{q})\geq d(p,q)$. In particular, the distance between $p$ and the geodesic segment $[o,z]$ is less than 1. This concludes the proof of the lemma. 
\end{proof}

\subsection{Hausdorff dimension} 

Let $E$ be a Borel subset of $\bi\M$. For every $s>0$ and $\delta>0$, set
\[
\H^s_\delta(E)=\inf \sum_i r_i^s,
\]
where the infimum is taken over all finite covering $\{\Bi(\xi_i,r_i)\}$ of $E$ with $r_i\leq\delta$. Also define the (outer) \emph{Hausdorff measure} of dimension $s$ as
\[
\H^s(E)=\lim_{\delta\to 0} \H^s_\delta(E).
\]
It is well known that the map $\H^s(E)$ is infinite until it vanishes after some $s^\star\in\R$. 

\begin{definition} The Hausdorff dimension of a Borel set $E\subset\bi\M$ is the value
\[
\emph{HD}(E)=\sup\{s:\H^s(E)=\infty\} = \inf\{s:\H^s(E)=0\}.
\]
\end{definition}

We stress the fact that the Hausdorff dimension does not depends on the Gromov-Bourdon distance since all of them are conformal. Using this notation, Bishop-Jones theorem can be stated as follows (see \cite{bishop1997hausdorff} and \cite{paulin1997critical})

\begin{theorem}[Bishop-Jones] Let $\Gamma$ be a Kleinian group of isometries of a complete negatively curved $n$-dimensional Riemannian manifold $\widetilde{M}$ with pinched sectional curvatures $-b^2\leq k\leq -1$. Then
\[
\HD(\Lambda^r_\Gamma)=\delta_\Gamma.
\]
\end{theorem}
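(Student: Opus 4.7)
The plan is a two-sided estimate that compares shadows of orbit balls against the Poincar\'e series, term by term.

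For the upper bound $\HD(\Lambda^r_\Gamma) \leq \delta_\Gamma$, I would fix $R>0$ large and note that, by definition of the radial limit set, every $\xi \in \Lambda^r_\Gamma$ has the property that the ray $[o,\xi)$ passes within distance $R$ of infinitely many points of the orbit $\Gamma \cdot o$. Hence
$$\Lambda^r_\Gamma \subset \bigcap_{T>0} \bigcup_{\substack{\gamma \in \Gamma \\ d(o,\gamma o) > T}} \mathcal{O}_o(\gamma o, R).$$
Theorem \ref{teo:kai}, applied to (a point close to) $\gamma o$ on the ray $[o,\gamma o]$, gives $\diam\big(\mathcal{O}_o(\gamma o, R)\big) \leq C e^{-d(o,\gamma o)}$ for a constant $C = C(R)$. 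Thus for any $s > \delta_\Gamma$,
$$\H^s_\delta(\Lambda^r_\Gamma) \leq C^s \sum_{d(o,\gamma o) > T} e^{-s d(o,\gamma o)} \longrightarrow 0 \quad \text{as } T \to \infty,$$
since $P_\Gamma(s)$ converges. Letting $s \searrow \delta_\Gamma$ yields the upper bound.

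For the lower bound $\HD(\Lambda^r_\Gamma) \geq \delta_\Gamma$, my approach is to build a Patterson--Sullivan conformal density $(\mu_z)_{z\in\M}$ of dimension $\delta_\Gamma$ on $\bi \M$, obtained by Patterson's classical weighted limit along $s_n \searrow \delta_\Gamma$, and then apply Sullivan's shadow lemma to obtain
$$\mu_o\big(\mathcal{O}_o(\gamma o, R)\big) \asymp e^{-\delta_\Gamma d(o, \gamma o)}$$
for $R$ large and uniformly in $\gamma \in \Gamma$. A Borel--Cantelli argument using the divergence of the (possibly modified) Poincar\'e series at $s = \delta_\Gamma$ then shows that $\mu_o$ is concentrated on $\Lambda^r_\Gamma$. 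Inserting this shadow estimate into the sandwich $\mathfrak{B}(\xi, c^{-1}e^{-t}) \subset \mathcal{O}_o(\xi_t, 1) \subset \mathfrak{B}(\xi, c e^{-t})$ from Theorem \ref{teo:kai} yields the local bound $\mu_o\big(\mathfrak{B}(\xi,r)\big) \leq C r^{\delta_\Gamma}$ on $\Lambda^r_\Gamma$, and the mass distribution principle then forces $\HD(\Lambda^r_\Gamma) \geq \delta_\Gamma$.

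The hard step is the lower bound, and more precisely the Patterson--Sullivan construction when $\Gamma$ is of convergence type at $\delta_\Gamma$: one must modify the weights in the Poincar\'e series by a slowly growing function, as in Patterson's original paper, so that the deformed series diverges at $\delta_\Gamma$ while the conformality of the limit measure is preserved. A more technical, but unavoidable, point is the shadow lemma itself in variable curvature, where one cannot invoke explicit hyperbolic trigonometry; here the $\mathrm{CAT}(-1)$ comparison, essentially the sort of estimate used in Lemmas \ref{lem:main1}--\ref{lem:main2}, is what makes orbit shadows genuinely comparable to boundary balls of the expected radius, uniformly in the base point. As an alternative to Patterson--Sullivan, one could carry out the original Bishop--Jones Cantor construction, choosing a sparse sequence of orbit points in disjoint annuli whose shadows form a Moran-type subset of $\Lambda^r_\Gamma$ whose similarity dimension approaches $\delta_\Gamma$ from below.
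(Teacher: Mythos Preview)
The paper does not prove this theorem; it is quoted as a known background result with references to \cite{bishop1997hausdorff} and \cite{paulin1997critical}, so there is no proof in the paper to compare your argument against.

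That said, your sketch has a real gap in the lower bound for convergence-type groups. Patterson's modification of the weights forces the \emph{modified} series to diverge at $\delta_\Gamma$ and produces a conformal density of the right dimension, but it does not make the original Poincar\'e series $P_\Gamma(\delta_\Gamma)$ diverge. Once the limiting measure $\mu_o$ exists, the shadow lemma reads $\mu_o\big(\mathcal{O}_o(\gamma o,R)\big)\asymp e^{-\delta_\Gamma d(o,\gamma o)}$ with the \emph{unmodified} exponential, so the sum of the shadow masses is comparable to $P_\Gamma(\delta_\Gamma)$ itself. When $\Gamma$ is of convergence type this sum is finite, and the first Borel--Cantelli lemma then gives $\mu_o(\Lambda^r_\Gamma)=0$, the opposite of what you want. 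This is precisely why the Bishop--Jones result was new even in constant curvature: for a general (possibly convergence-type) $\Gamma$ the Patterson--Sullivan measure need not charge the radial limit set at all. The Cantor-set construction you mention at the end is not merely an alternative but is the actual argument in \cite{bishop1997hausdorff} and \cite{paulin1997critical}, and is what is required in full generality.
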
  

\section{Dimension of the limit set}

In this section $\Gamma<\textrm{Isom}^+(\M)$ will always be a Kleinian group, $\Lambda$ will denote its limit set, $\Lambda^r$ its radial limit set and $\Lambda^l$ its linear escape limit set. To study limit points which do not escape linearly, we introduce a weakened form of recurrence. Recall that radial limit points are characterized by infinitely many approximations by orbit shadows at the natural exponential scale $e^{-d(o,\gamma o)}$. In the absence of linear escape, one still obtains infinitely many such approximations, although at a slower exponential rate. This motivates the following definition.

\begin{definition}\label{def:wrls} Let $\kappa>0$. The \emph{$\kappa$-weakly recurrent limit set} $\Lambda^r(\kappa)$ is defined as 
\[
\Lambda^r(\kappa)= \bigcup_{c>0} \left\{ \xi\in\Lambda : \xi\in\Bi((\gamma \cdot o)^+,ce^{-\frac{1}{1+\kappa}d(o,\gamma\cdot o)}) \textrm{ for infinitely many }\gamma\in\Gamma \right\}.
\]
\end{definition}

One can verify that, if $0<\kappa_1\leq\kappa_2$, then $\Lambda^r(\kappa_2)\subset \Lambda^r(\kappa_1)$. Moreover, for every $\kappa>0$, one has 
\begin{equation}\label{eq:weaklyrecurrent1}
\Lambda^r\subset \Lambda^r(\kappa).
\end{equation}
The following proposition relates weakly recurrent limit sets with linear escape limits sets. Let $\eta(\beta)=\frac{1-2\beta-\beta^2}{1-\beta^2}$

\begin{prop}\label{prop:key} Let $0<\alpha<\beta<1/5$. For every $\xi\in\Lambda\setminus\Lambda^l(\alpha)$ there exist $c\geq 1$ and a sequence $(\gamma_n)_n\subset\Gamma$, such that
\[
\xi\in\Bi\left( (\gamma_n\cdot o)^+, ce^{- \eta(\beta)d(o,\gamma_n\cdot o)} \right)
\]
for every $n\geq 1$. In particular, we have $\Lambda\setminus\Lambda^l(\alpha)\subset \Lambda^r\left(\frac{1-\eta(\beta)}{\eta(\beta)}\right)$.
\end{prop}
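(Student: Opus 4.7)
First I would unpack the hypothesis $\xi\in\Lambda\setminus\Lambda^l(\alpha)$. Since $\xi$ is a limit point, failure of the defining condition of $\Lambda^l(\alpha)$ means $\liminf_{t\to\infty}\Delta(\xi_t)/t\leq\alpha<\beta$. I pick an intermediate value $\alpha'\in(\alpha,\beta)$ and select a sequence $t_n\to\infty$ with $\Delta(\xi_{t_n})\leq\alpha' t_n$. For each $n$ I then choose $\gamma_n\in\Gamma$ realizing the infimum in $\Delta(\xi_{t_n})$ up to an additive $1$, so that for $n$ large one has $d(\xi_{t_n},\gamma_n\cdot o)\leq\beta t_n=\beta\,d(o,\xi_{t_n})$. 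The reverse triangle inequality gives $d(o,\gamma_n\cdot o)\geq(1-\beta)t_n\to\infty$; in particular the $\gamma_n$'s contain infinitely many distinct elements of $\Gamma$, which is exactly what Definition \ref{def:wrls} requires.

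The next step is purely geometric: apply Lemma \ref{lem:main2} with $z=\xi_{t_n}$ and $w=\gamma_n\cdot o$, so that $z^+=\xi$ and $w^+=(\gamma_n\cdot o)^+$. The hypothesis $\beta<1/5$ is precisely what the lemma requires, and it yields a point $p_n\in[o,(\gamma_n\cdot o)^+)$ satisfying $d(p_n,[o,\xi))\leq 1$ together with
\[
d(o,p_n)\geq\frac{1-2\beta-\beta^2}{1-\beta^2}\,d(o,\gamma_n\cdot o).
\]
Thus $p_n$ sits on the ray from $o$ toward $(\gamma_n\cdot o)^+$ at a depth proportional to $d(o,\gamma_n\cdot o)$, while staying within uniform distance $1$ of the ray $[o,\xi)$.

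The last step converts this geometric closeness into a visual-metric estimate via Kaimanovich's shadow lemma (Theorem \ref{teo:kai}). From $d(p_n,[o,\xi))\leq 1$ one gets that $[o,\xi)$ meets $B(p_n,2)$, hence $\xi\in\mathcal{O}_o(p_n,2)$. Kaimanovich's statement concerns shadows of radius $1$, but the same two-sided comparison is standard for any radius $R>0$: there is a constant $c_R\geq 1$ such that $\mathcal{O}_o(z,R)\subset\Bi\!\left(z^+,c_R e^{-d(o,z)}\right)$ for every $z\in\M\setminus\{o\}$. Applying this with $R=2$ to $p_n$, whose extremity at infinity is $(\gamma_n\cdot o)^+$, yields
\[
\xi\in\Bi\!\left((\gamma_n\cdot o)^+,\,c_2\,e^{-d(o,p_n)}\right)\subset\Bi\!\left((\gamma_n\cdot o)^+,\,c_2\,e^{-\frac{1-2\beta-\beta^2}{1-\beta^2}\,d(o,\gamma_n\cdot o)}\right),
\]
which is the bound claimed in the statement with $c:=c_2$. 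For the ``in particular'' clause, the algebraic identity
\[
\frac{1}{1+\kappa}=\frac{1-2\beta-\beta^2}{1-\beta^2}\quad\text{with}\quad\kappa=\frac{2\beta}{1-2\beta-\beta^2}
\]
matches the exponent appearing in Definition \ref{def:wrls}, and the distinctness of infinitely many $\gamma_n$ (from the first paragraph) shows $\xi\in\Lambda^r(\kappa)$.

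The only mildly delicate point in the whole argument is the extension of Kaimanovich's theorem from radius $1$ to radius $2$; this is standard and the resulting constant is absorbed into $c$. Otherwise the proof is a clean two-step affair: extract a near-realizing sequence from the liminf condition, and push it through the quantitative geometric lemma already proved in the preliminaries.
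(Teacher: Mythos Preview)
Your proof is correct and follows the same route as the paper's: extract $(t_n,\gamma_n)$ from the liminf condition, apply Lemma~\ref{lem:main2} to produce $p_n\in[o,(\gamma_n\cdot o)^+)$ within distance $1$ of $[o,\xi)$, and then invoke Kaimanovich's shadow estimate to land in the desired visual ball. The only differences are cosmetic---you use shadow radius $2$ rather than $1$ (to sidestep the closed-ball edge case) and spell out the distinctness of the $\gamma_n$ and the algebraic identity for $\kappa$, both of which the paper leaves implicit.
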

\begin{proof}
Let $\xi\in\Lambda\setminus\Lambda^l(\alpha)$. Then, by definition, we have
\[
\liminf_{t\to\infty} \frac{1}{t}\Delta(\xi_t)\leq \alpha.
\]
Since $\beta>\alpha$, there are sequences $(t_n)_n\nearrow\infty$ and $(\gamma_n)_n\subset\Gamma$ with
\[
d(\xi_{t_n},\gamma_n\cdot o) \leq \beta t_n,
\]    
for all $n\geq 1$. By Theorem \ref{teo:kai}, $\xi\in\mathcal{O}_o(\xi_{t_n},1)\subset\Bi(\xi,ce^{-t_n})$ for some universal constant $c\geq 1$. Note that $t_n=d(o,\xi_{t_n})$, so we can apply Lemma \ref{lem:main1} for $z=\xi_{t_n}$ and $w=\gamma_n\cdot o$, to get
\[
d(\xi_{t_n},\gamma_n\cdot o) \leq \beta d(o,\xi_{t_n}).
\]
By Lemma \ref{lem:main2}, there is $p_n\in [o,(\gamma_n\cdot o)^+)$ such that 
\[
d(o,p_n)\geq \eta(\beta)d(o,\gamma_n \cdot o) \quad \mbox{and} \quad d\big(p_n,[o,\xi)\big)\leq 1.
\]
In particular, the last inequality ensures that $\xi\in \mathcal{O}_o(p_n,1)$. Finally, since 
\begin{eqnarray*}
\mathcal{O}_o(p_n,1) &\subset& \Bi((\gamma_n\cdot o)^+,ce^{-d(o,p_n)}),\\
&\subset& \Bi\left( (\gamma_n\cdot o)^+, ce^{-\eta(\beta) d(o,\gamma_n\cdot o)} \right),
\end{eqnarray*}
the conclusion of this proposition follows.
\end{proof}

Proposition \ref{prop:key} shows that limit points escaping slower than linearly belong to weakly recurrent limit sets associated to weaker exponential approximation scales. Therefore, to prove Theorem \ref{main1}, it suffices to estimate the Hausdorff dimension of weakly recurrent limit sets.

Observe that if $\Gamma$ is not a $d$-group, then
\[
\HD(\Lambda)=\HD(\Lambda^r),
\]
and the theorem is immediate. We therefore restrict to the case where $\Gamma$ is a $d$-group.

The following proposition was proved by Falk and Stratmann \cite{falk2004remarks} in the hyperbolic setting. The same argument applies in variable negative curvature, but for completeness we include the proof. Set
\[
\delta^\star=
\frac{\HD(\Lambda)-\delta_\Gamma}{\delta_\Gamma}.
\]

\begin{prop}[Falk-Stratmann]\label{prop:FS} Assume $\Gamma$ is a $d$-group and let $0<\kappa<\delta^\star$. Then
\[
\delta_\Gamma\leq\emph{HD}(\Lambda^r(\kappa))<\emph{HD}(\Lambda).
\]
\end{prop}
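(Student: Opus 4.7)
The plan is to obtain the lower bound directly from Bishop--Jones and the strict upper bound via a Borel--Cantelli style covering estimate, where the Poincar\'e series of $\Gamma$ appears naturally because the defining radii in $\Lambda^{r}(\kappa)$ are exponentials of $d(o,\gamma\cdot o)$.

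For the lower bound, inclusion \eqref{eq:weaklyrecurrent1} gives $\Lambda^{r}\subset\Lambda^{r}(\kappa)$, so monotonicity of Hausdorff dimension together with the Bishop--Jones theorem yields
\[
\delta_\Gamma=\HD(\Lambda^{r})\leq\HD(\Lambda^{r}(\kappa)).
\]

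For the strict upper bound I would prove the sharper inequality $\HD(\Lambda^{r}(\kappa))\leq(1+\kappa)\delta_\Gamma$; the hypothesis $\kappa<\delta^{\star}$ is precisely the condition $(1+\kappa)\delta_\Gamma<\HD(\Lambda)$, so this will conclude the proof. Write
\[
\Lambda^{r}(\kappa)=\bigcup_{n\geq 1}\Lambda_n,\qquad \Lambda_n=\Bigl\{\xi\in\Lambda:\xi\in\Bi\bigl((\gamma\cdot o)^{+},n\,e^{-\frac{1}{1+\kappa}d(o,\gamma\cdot o)}\bigr)\text{ for infinitely many }\gamma\in\Gamma\Bigr\},
\]
so by countable stability of Hausdorff dimension it is enough to bound $\HD(\Lambda_n)$ for each fixed $n$. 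For any $R>0$, the ``infinitely often'' clause implies that the family
\[
\bigl\{\Bi\bigl((\gamma\cdot o)^{+},n\,e^{-\frac{1}{1+\kappa}d(o,\gamma\cdot o)}\bigr):\gamma\in\Gamma,\ d(o,\gamma\cdot o)\geq R\bigr\}
\]
covers $\Lambda_n$, and all radii fall below any prescribed $\delta>0$ once $R$ is large enough. For $s>(1+\kappa)\delta_\Gamma$ the $s$-sum of the radii equals
\[
n^{s}\sum_{d(o,\gamma\cdot o)\geq R}e^{-\frac{s}{1+\kappa}d(o,\gamma\cdot o)},
\]
which is a tail of the Poincar\'e series $P_\Gamma(s/(1+\kappa))$. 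Since $s/(1+\kappa)>\delta_\Gamma$ this series converges, so the tail vanishes as $R\to\infty$. Hence $\H^{s}_{\delta}(\Lambda_n)=0$ for every $\delta>0$, giving $\H^{s}(\Lambda_n)=0$ and therefore $\HD(\Lambda_n)\leq s$ for every $s>(1+\kappa)\delta_\Gamma$. Letting $s\searrow(1+\kappa)\delta_\Gamma$ and taking the countable union over $n$ yields the desired bound.

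There is no real obstacle; the argument is mostly bookkeeping. The one point worth noting is that the factor $1/(1+\kappa)$ in the defining radii dilates the critical exponent by exactly $1+\kappa$, so the $d$-group hypothesis (which forces $\delta^{\star}>0$) is precisely what provides the slack needed to keep $(1+\kappa)\delta_\Gamma$ strictly below $\HD(\Lambda)$.
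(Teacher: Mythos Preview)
Your proof is correct and follows essentially the same route as the paper's: the lower bound via the inclusion $\Lambda^r\subset\Lambda^r(\kappa)$ and Bishop--Jones, and the upper bound by writing $\Lambda^r(\kappa)$ as a countable union of $\limsup$-sets, covering each by the tails $\{B_n(\gamma):d(o,\gamma\cdot o)\geq R\}$, and bounding the $s$-sum of radii by a tail of the Poincar\'e series at exponent $s/(1+\kappa)>\delta_\Gamma$. Your treatment is in fact slightly more explicit in noting that the tail tends to zero (so that $\mathcal H^s_\delta=0$ for every $\delta$), but the argument is the same.
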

\begin{proof}
The first inequality follows from \eqref{eq:weaklyrecurrent1} and $\textrm{HD}(\Lambda^r)=\delta_\Gamma$. For the second one, let $0<\kappa<\delta^\star$. For any integer $m\geq 1$ and $\gamma\in\Gamma$, set $B_m(\gamma)= \Bi((\gamma \cdot o)^+,me^{-\frac{1}{1+\kappa}d(o,\gamma\cdot o)})$. Then, by definition
\[
\Lambda^r(\kappa)= \bigcup_{m\geq 1} \limsup\{B_m(\gamma), \gamma\in\Gamma \}.
\]

For each $m\geq 1$ and $R>0$, the family $\{B_m(\gamma):\gamma\in\Gamma, \ d(o,\gamma\cdot o)\geq R\}$ forms a covering of $\limsup\{B_m(\gamma), \gamma\in\Gamma \}$. Moreover, for any $\delta>0$, there is $R_0=R_0(m,\kappa)>0$ such that $\textrm{diam}(B_m(\gamma))<\delta$ if $d(o,\gamma\cdot o)\geq R_0$. Since
\[
\sum_{\gamma\in\Gamma, d(o,\gamma\cdot o)\geq R} (me^{-\frac{1}{1+\kappa}d(o,\gamma\cdot o)})^s<\infty \quad \textrm{for all} \quad s>(1+\kappa)\delta_\Gamma,
\]
we get $\mathcal{H}^s(\limsup\{B_m(\gamma), \gamma\in\Gamma \})=0$, for $s>(1+\kappa)\delta_\Gamma$. In particular, for each $m\geq 1$, we have
\[
\textrm{HD}(\limsup\{ B_m(\gamma), \gamma\in\Gamma \})\leq (1+\kappa)\delta_\Gamma.
\]
Recall that the Hausdorff dimension is stable under countable union of sets, so 
\[
\textrm{HD}\big(\Lambda^r(\kappa)\big)\leq (1+\kappa)\delta_\Gamma.
\]
Finally, since $\kappa<\delta^\star$, one can easily verify that $\textrm{HD}\big(\Lambda^r(\kappa)\big)<\textrm{HD}(\Lambda)$.
\end{proof}

\begin{proof}[Proof of Theorem \ref{main1}]

By Proposition \ref{prop:key}, we have
\[
\Lambda\setminus\Lambda^l(\alpha)
\subset
\Lambda^r\left(\frac{1-\eta(\beta)}{\eta(\beta)}\right),
\]
and therefore
\[
\HD\big(\Lambda\setminus\Lambda^l(\alpha)\big)
\leq
\HD\left(
\Lambda^r\left(\frac{1-\eta(\beta)}{\eta(\beta)}\right)
\right).
\]

Since $(1-\eta(\beta))/\eta(\beta)\to 0$ as $\beta\to 0$, there exists $\beta>0$ sufficiently small such that
\[
\frac{1-\eta(\beta)}{\eta(\beta)}<\delta^\star.
\]
Hence, Proposition \ref{prop:FS} implies that
\[
\HD\big(\Lambda\setminus\Lambda^l(\alpha)\big)<\HD(\Lambda).
\]
On the other hand, we have
\[
\Lambda=\big(\Lambda\setminus\Lambda^l(\alpha)\big)\cup\Lambda^l(\alpha),
\]
so it follows that
\[
\HD\big(\Lambda^l(\alpha)\big)=\HD(\Lambda)
\]
for every sufficiently small $\alpha>0$. In particular,
\[
\HD(\Lambda)=\max\{\HD(\Lambda^r),\HD(\Lambda^l)\}.
\]

\end{proof}

\subsection{Final thoughts}

Theorem \ref{main1} shows that, for discrepancy groups, the full Hausdorff dimension of the limit set is carried by linearly escaping limit points. This phenomenon contrasts sharply with the behavior of several canonical measures on the boundary at infinity.

Indeed, if $\Gamma$ is of divergence type, then the Patterson--Sullivan measure gives full measure to the radial limit set by the Hopf-Tsuji-Sullivan theorem. Since $\Lambda^l$ is disjoint from the radial limit set, it follows that the Patterson--Sullivan measure of $\Lambda^l$ vanishes.

Similarly, let $\mu$ be a non-elementary probability measure on $\Gamma$ with finite first moment, and let $\nu_\mu$ denote the associated harmonic measure on $\partial_\infty \widetilde M$. By sublinear tracking results for random walks on negatively curved spaces (see for instance \cite{Kai2000,MT2018}), $\nu_\mu$-almost every sample path stays at sublinear distance from a geodesic ray. More precisely, for $\nu_\mu$-almost every boundary point $\xi$, there exists a sample path $(w_n\cdot o)_n$ converging to $\xi$ and a sequence $t_n\to\infty$ such that
\[
d(w_n \cdot o,\xi_{t_n})=o(n),
\]
where $\xi$ denotes the boundary point determined by the random walk and $\xi_{t_n}\in[o,\xi)$ is the point at distance $t_n$ from $o$. Thus, we obtain
\[
\Delta(\xi_{t_n})=d(\xi_{t_n},\Gamma\cdot o)\leq d(\xi_{t_n},w_n \cdot o)=o(n).
\]
Moreover, by positive drift,
\[
\frac{1}{n}d(o,w_n \cdot o)\longrightarrow L>0,
\]
so that $t_n$ grows linearly in $n$. Therefore,
\[
\liminf_{t\to\infty}\frac{\Delta(\xi_t)}{t} \leq\liminf_{n\to\infty}\frac{\Delta(\xi_{t_n})}{t_n}=0.
\]
It follows that $\xi\notin\Lambda^l(\alpha)$ for every $\alpha>0$. Consequently, $\nu_\mu(\Lambda^l)=0.$

Thus, for discrepancy groups, the linear escape limit set may have full Hausdorff dimension while remaining negligible for both Patterson-Sullivan measures and harmonic measures associated to non-elementary random walks with finite first moment. The vanishing of harmonic measure on $\Lambda^l_\Gamma$ relies crucially on finite first moment assumptions through sublinear tracking results for random walks on negatively curved spaces. It would be interesting to understand whether harmonic measures associated to random walks with infinite first moment may charge the linear escape limit set.

\bibliographystyle{plain} 
\bibliography{bib.bib}

@article{paulin1997critical,
  title={On the critical exponent of a discrete group of hyperbolic isometries},
  author={Paulin, Fr{\'e}d{\'e}ric},
  journal={Differential Geometry and its Applications},
  volume={7},
  number={3},
  pages={231--236},
  year={1997},
  publisher={Elsevier}
}

@article {bishop1997hausdorff,
    AUTHOR = {Bishop, Christopher J. and Jones, Peter W.},
     TITLE = {Hausdorff dimension and {K}leinian groups},
   JOURNAL = {Acta Math.},
  FJOURNAL = {Acta Mathematica},
    VOLUME = {179},
      YEAR = {1997},
    NUMBER = {1},
     PAGES = {1--39},
      ISSN = {0001-5962,1871-2509},
   MRCLASS = {22E40 (30F40)},
  MRNUMBER = {1484767},
MRREVIEWER = {Bernd\ O.\ Stratmann},
       DOI = {10.1007/BF02392718},
       URL = {https://doi.org/10.1007/BF02392718},
}

@article{bishop2004linear,
  title={The linear escape limit set},
  author={Bishop, Christopher},
  journal={Proceedings of the American Mathematical Society},
  volume={132},
  number={5},
  pages={1385--1388},
  year={2004}
}

@inproceedings{kaimanovich1990invariant,
  title={Invariant measures of the geodesic flow and measures at infinity on negatively curved manifolds},
  author={Kaimanovich, Vadim A},
  booktitle={Annales de l'IHP Physique th{\'e}orique},
  volume={53},
  number={4},
  pages={361--393},
  year={1990}
}

@article{had:1898,
  title={Les surfaces \`a courbures oppos\'ees et leurs lignes g\'eod\'esiques},
  author={Hadamard, Jacques},
  journal={Journal de Math\'ematiques pures et appliqu\'ees},
  volume={5},
  number={4},
  pages={27--73},
  year={1898}
}

@article {Pat:1976,
    AUTHOR = {Patterson, S. J.},
     TITLE = {The limit set of a {F}uchsian group},
   JOURNAL = {Acta Math.},
  FJOURNAL = {Acta Mathematica},
    VOLUME = {136},
      YEAR = {1976},
    NUMBER = {3-4},
     PAGES = {241--273},
      ISSN = {0001-5962,1871-2509},
   MRCLASS = {30A58 (10D10 20H10)},
  MRNUMBER = {450547},
MRREVIEWER = {B.\ Maskit},
       DOI = {10.1007/BF02392046},
       URL = {https://doi.org/10.1007/BF02392046},
}

@article {Mat:2000,
    AUTHOR = {Matsuzaki, Katsuhiko},
     TITLE = {The {H}ausdorff dimension of the limit sets of infinitely
              generated {K}leinian groups},
   JOURNAL = {Math. Proc. Cambridge Philos. Soc.},
  FJOURNAL = {Mathematical Proceedings of the Cambridge Philosophical
              Society},
    VOLUME = {128},
      YEAR = {2000},
    NUMBER = {1},
     PAGES = {123--139},
      ISSN = {0305-0041,1469-8064},
   MRCLASS = {30F40 (20H10)},
  MRNUMBER = {1724434},
MRREVIEWER = {Peter\ J.\ Nicholls},
       DOI = {10.1017/S0305004199003953},
       URL = {https://doi.org/10.1017/S0305004199003953},
}

@article {st97,
    AUTHOR = {Stratmann, B.},
     TITLE = {The {H}ausdorff dimension of bounded geodesics on
              geometrically finite manifolds},
   JOURNAL = {Ergodic Theory Dynam. Systems},
  FJOURNAL = {Ergodic Theory and Dynamical Systems},
    VOLUME = {17},
      YEAR = {1997},
    NUMBER = {1},
     PAGES = {227--246},
      ISSN = {0143-3857,1469-4417},
   MRCLASS = {58F11 (20H10 28A78 30F40 58F17)},
  MRNUMBER = {1440778},
MRREVIEWER = {Christopher\ Bishop},
       DOI = {10.1017/S0143385797069800},
       URL = {https://doi.org/10.1017/S0143385797069800},
}

@article {Bou95,
    AUTHOR = {Bourdon, Marc},
     TITLE = {Structure conforme au bord et flot g\'eod\'esique d'un {${\rm
              CAT}(-1)$}-espace},
   JOURNAL = {Enseign. Math. (2)},
  FJOURNAL = {L'Enseignement Math\'ematique. Revue Internationale. 2e
              S\'erie},
    VOLUME = {41},
      YEAR = {1995},
    NUMBER = {1-2},
     PAGES = {63--102},
      ISSN = {0013-8584},
   MRCLASS = {58F17 (20F05 20F32 57S30)},
  MRNUMBER = {1341941},
MRREVIEWER = {Michel\ Coornaert},
}

@article{falk2004remarks,
  title={Remarks on Hausdorff dimensions for transient limit sets of Kleinian groups},
  author={Falk, Kurt and Stratmann, Bernd O},
  journal={Tohoku Mathematical Journal, Second Series},
  volume={56},
  number={4},
  pages={571--582},
  year={2004},
  publisher={Mathematical Institute, Tohoku University}
}

@article{lundh2003,
  title={Geodesics on Quotient Manifolds and Their Corresponding Limit Points},
  author={Lundh, Torbj\"orn},
  journal={Michigan Math. J.},
  volume={51},
  number={2},
  pages={279--304},
  year={2003}
}

@article {MT2018,
    AUTHOR = {Maher, Joseph and Tiozzo, Giulio},
     TITLE = {Random walks on weakly hyperbolic groups},
   JOURNAL = {J. Reine Angew. Math.},
  FJOURNAL = {Journal f\"ur die Reine und Angewandte Mathematik. [Crelle's
              Journal]},
    VOLUME = {742},
      YEAR = {2018},
     PAGES = {187--239},
      ISSN = {0075-4102,1435-5345},
   MRCLASS = {20F67 (20F65 20P05 37D40 60B15 60G50)},
  MRNUMBER = {3849626},
MRREVIEWER = {Peter\ Ha\"issinsky},
       DOI = {10.1515/crelle-2015-0076},
       URL = {https://doi-org.pucv.idm.oclc.org/10.1515/crelle-2015-0076},
}

@article {Kai2000,
    AUTHOR = {Kaimanovich, Vadim A.},
     TITLE = {The {P}oisson formula for groups with hyperbolic properties},
   JOURNAL = {Ann. of Math. (2)},
  FJOURNAL = {Annals of Mathematics. Second Series},
    VOLUME = {152},
      YEAR = {2000},
    NUMBER = {3},
     PAGES = {659--692},
      ISSN = {0003-486X,1939-8980},
   MRCLASS = {60J35 (20F67 28D20 31C05 60G50 60J50)},
  MRNUMBER = {1815698},
MRREVIEWER = {Wolfgang\ Woess},
       DOI = {10.2307/2661351},
       URL = {https://doi-org.pucv.idm.oclc.org/10.2307/2661351},
}

\end{document}